\newtheorem{theorem}{Theorem}[section]
\newtheorem{lemma}[theorem]{Lemma}
\newtheorem{prop}[theorem]{Proposition}
\theoremstyle{definition}
\newtheorem{definition}[theorem]{Definition}
\theoremstyle{remark}
\newtheorem{Remark}[theorem]{Remark}
\newcommand{\Z}{\mathbb{Z}}
\newcommand{\N}{\mathbb{N}}
\newcommand{\R}{\mathbb{R}}
\newcommand{\T}{\mathbb{T}}      
\newcommand{\C}{\mathbb{C}} 
\newcommand{\ii}{\mathrm i}
\newcommand{\e}{\mathrm e}
\newcommand{\sgn}{\operatorname{sgn}}
\newcommand{\FT}{\mathcal F}
\newcommand{\IFT}{\FT^{-1}}
\newcommand{\wht}[1]{\widehat{#1}}
\newcommand{\Lap}{\Delta_{}}
\newcommand{\Lapd}{\Delta_{s}}
\numberwithin{equation}{section}
\begin{document}

\title{Harmonic extensions on $\mathbb{Z} \times \mathbb{N}$ and a discrete Hilbert transform}

\author{Ljupcho Petrov}
\address{Department of Mathematics, Washington University in St. Louis, One Brookings Drive, St. Louis, MO 63130}
\email{petrov.l@wustl.edu}

\subjclass[2020]{Primary 42A50, 39A12}

\begin{abstract}
For a given boundary sequence $a=(a_n)_{n\in\mathbb{Z}}$, we construct harmonic extensions $U,V:\mathbb{Z}\times\ \mathbb{N}\to \mathbb{R}$ that serve as discrete analogs of the Poisson and conjugate-Poisson integrals. The construction is characterized by: (i) discrete harmonicity with respect to a two-dimensional Laplacian, (ii) a Cauchy-Riemann system, and (iii) boundary values involving a discrete Hilbert transform: $U(n,0)=a_n,\;V(n,0)=(H_{\mathrm d}a)_n$. We compare $H_{\mathrm d}$ to the Riesz-Titchmarsh transform and prove weak-type $(1,1)$ and $\ell^{p}$ bounds for $p>1$. We also extend the constructions to harmonic extensions on $\mathbb{Z}^s \times \mathbb{N}$. These results provide a discrete harmonic-analytic model analogous to the classical theory.
\end{abstract}

\maketitle

\section{Introduction}

\subsection*{The continuous model on $\R\times\R_{+}$}

On the real line, the Hilbert transform $Hf$ arises as the boundary value of the
conjugate harmonic function $Q[f]\;=\;\Im F[f]$, where
\[
F[f](z)=\frac{\ii}{\pi}\int_{\R}\frac{f(t)}{z-t}\,dt,\quad z=x+\ii y\in\C_{+}.
\]
Precisely, with the Poisson extension $\Re F[f]=P[f]$, for a.e. $x\in\R$ we have \cite{stein-book}
\[
  \lim_{y\to0^{+}}P[f](x+\ii y)=f(x),\qquad
  \lim_{y\to0^{+}}Q[f](x+\ii y)=Hf(x).
\]

\subsection*{Discrete–continuous settings on $\Z\times\R_{+}$}

In \cite{CiaurriEtAl2017}, Ciaurri, Gillespie, Roncal, Torrea, and Varona show that the discrete Riesz-Titchmarsh transforms arise as a limit of conjugate harmonic extensions for the discrete fractional Laplacian on $\Z\times\R_{+}$. They start from the one–dimensional discrete Laplacian
\[
  \Delta_{x} f(n)=f(n{+}1)-2f(n)+f(n{-}1),\qquad n\in\Z,
\]
and construct the heat semigroup $\{W_y\}_{y>0}$ with explicit kernel
\[
  W_y f(n)=\sum_{m\in\Z} e^{-2y}\,I_{\,n-m}(2y)\,f(m),
\]
where $I_k$ is the modified Bessel function. The function $u(n,y):=W_y f(n)$ solves the semi-discrete heat equation $\partial_y u(n,y)=\Delta_{x} u(n,y)$ with the boundary sequence on $\Z$ being $u(\cdot,0)=f$. Here, $\partial_y$ is the continuous derivative in $y$. From the heat semigroup, they obtain the Poisson semigroup by subordination and use it to define the fractional powers $(-\Delta_{x})^\sigma$ for $0<\sigma<1$. They utilize the difference factorization of the Laplacian: $\Delta_{x}= \delta_h^{+} \, \delta_h^{-}$, with 
$$  \delta_h^{+} f(n)=f(n{+}1)-f(n) \quad \text{and} \quad  \delta_h^{-}f(n)=f(n)-f(n{-}1).$$
Since $(-\Delta_{x})^{-1/2}$ is not well defined on $\Z$, they set
\[
  R^+=\lim_{\alpha\to 1/2^+}  \delta_h^{+} \,(-\Delta_{x})^{-\alpha},\qquad
  R^{-}=\lim_{\alpha\to 1/2^+}  \delta_h^{-}\,(-\Delta_{x})^{-\alpha},
\]
and identify these as convolution operators with the kernels of the Riesz–Titchmarsh transforms $H^{\pm}$ \cite{Titchmarsh1926Reciprocal,Titchmarsh1928Inequality,Riesz1928Conjuguees}: 

$$K^+(n)=\frac{1}{\pi}\frac{1}{n+\tfrac12}, \qquad K^-(n)=\frac{1}{\pi}\frac{1}{n-\tfrac12}.$$  
The associated conjugate harmonic functions $Q_y f:=R^+ P_y f$ and $Q_y^{*} f:=R^{-}P_y f$ satisfy Cauchy–Riemann type identities involving $ \delta_h^{+}$, $ \delta_h^{-}$, and the continuous derivative $\partial_y$. The boundary limits recover $R^+$ and $R^{-}$. 

In \cite{ArcozziDomelevoPetermichl2020}, Arcozzi, Domelevo, and Petermichl develop a probabilistic and semigroup-based account of the centered Hilbert transform
\[
  H_c=\tfrac12\,(H^{+}+H^{-}),
\]
built from the transforms $H^{\pm}$ determined by the discrete derivatives $\delta_h^{+}$ and $\delta_h^{-}$ via the factorization
\[
  H^{+}\circ\sqrt{-\Delta_x}= \delta_h^{+},\qquad H^{-}\circ\sqrt{-\Delta_x}= \delta_h^{-}. 
\]
On the upper half-space $\Z\times\R_{+}$, they take $A=\sqrt{-\Delta_x}$ and define the semi-discrete Poisson semigroup $P_y=e^{-yA}$, $y>0$.  The Poisson extension $u_f(y,x):=(P_y f)(x)$ solves the semi-discrete Laplace equation
\[
  \Delta_x u_f+\partial_y^2 u_f=0\quad\text{on }\Z\times\R_{+}.
\]
They also present Cauchy–Riemann identities relating $u_f$ to the Poisson extension of $H_cf$.  

\subsection*{Discrete analytic functions}

A classical approach in discrete complex analysis is to define a notion of “analyticity” on lattices via discrete Cauchy–Riemann relations.  On the integer lattice, one can define a pair of real-valued functions whose finite differences satisfy relations mimicking the continuous Cauchy-Riemann equations.  This was pursued early by Ferrand \cite{Ferrand1944} and Isaacs \cite{Isaacs1941}, and later by Duffin \cite{Duffin1953, Duffin1956}. 

\subsection*{Main contributions of this paper}

In contrast to the semi–discrete settings described above, we impose the fully discrete
five–point Laplace equation on $\Z\times\N$. For a given boundary sequence $a=(a_n)_{n\in\Z}$, we build a pair of functions $U,V:\Z\times\N\longrightarrow\R$,
that play the role of the Poisson and conjugate–Poisson extensions on the fully discrete upper half–lattice $\Z\times\N$. The conjugate extension is forced by requiring a lattice Cauchy–Riemann system to hold and gives rise to a discrete Hilbert transform $H_{\mathrm{d}}$. We are not aware of any previous work that realizes a discrete Hilbert transform by means of extensions to $\Z \times \N$ of this type.  

\begin{definition}\label{def: My_H_d}
The discrete Hilbert transform $H_{\mathrm{d}}$ of a sequence $a \in\ell^p(\Z)$ with $p \ge 1$ is defined as
  \[
    (H_{\mathrm{d}}a)_n
      =\IFT\!\bigl[ (-\ii\,\sgn\theta)\, \rho(\theta)^{1/2}\e^{\ii\theta/2} \wht a(\theta)\bigr](n).
  \]    
\end{definition}

With this definition, our main result is the following.

\begin{theorem}[Main theorem]\label{thm:main}
Let $p > 1$ and $a\in\ell^p(\Z)$. Define
  \[
      U(n,k)=\sum_{m\in\Z} P_k(n-m)\,a_m,
      \qquad
      V\!\left(n,k\right)=\sum_{m\in\Z} Q_k(n-m)\,a_m .
  \]
Then:
  \begin{enumerate}[label=\textup{(\alph*)}]
    \item (Discrete harmonicity). Both $U$ and $V$ are harmonic with respect to the two–dimensional five–point Laplacian
\[
(\Lap{F})(n,k)=F(n{+}1,k)+F(n{-}1,k)+F(n,k{+}1)+F(n,k{-}1)-4F(n,k), \quad k\ge1. \]
    \item (Boundary values at height $k=0$). At height $k=0$, the extensions satisfy
\[
U(n,0)=a_n,\qquad
V(n,0)=(H_{\mathrm{d}}a)_n,
\]
where $H_{\mathrm{d}}$ is the discrete Hilbert transform defined above.
    \item (Cauchy–Riemann relations). $U$ and $V$ satisfy the discrete Cauchy–Riemann relations
      of Proposition~\ref{prop:CR-new}.
  \end{enumerate}
\end{theorem}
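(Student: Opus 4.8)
The strategy is to pass to the Fourier side in the $n$-variable throughout, which turns the $\Z$-directional difference operators into multiplication by trigonometric symbols and converts the five-point Laplace equation into a constant-coefficient linear recurrence in the height variable $k$. First I would compute $\wht{P_k}(\theta)$ and $\wht{Q_k}(\theta)$; the natural ansatz, dictated by the recurrence below, is
\[
  \wht{P_k}(\theta) = \rho(\theta)^{k},
  \qquad
  \wht{Q_k}(\theta) = (-\ii\,\sgn\theta)\,\e^{\ii\theta/2}\,\rho(\theta)^{\,k+\tfrac12},
\]
where $\rho(\theta)\in(0,1]$ is the root, of modulus $\le 1$, of the characteristic equation associated with $\Lap$. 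Concretely, applying $\FT$ in $n$ to $(\Lap F)(n,k)=0$ gives
\[
  g(k{+}1) + g(k{-}1) - \bigl(4 - 2\cos\theta\bigr)\,g(k) = 0,
  \qquad g(k) := \wht{F(\cdot,k)}(\theta),
\]
whose indicial equation $r^2 - (4-2\cos\theta)\,r + 1 = 0$ has reciprocal roots $\rho^{\pm1}$ with $\rho(\theta) = \bigl(2 - \cos\theta\bigr) - \sqrt{(2-\cos\theta)^2 - 1}$; one checks $0<\rho\le 1$ with $\rho(0)=1$, so $\rho^{k}$ is the decaying solution and $\rho(\theta)^{1/2}$ is well defined. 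This identification is the technical core from which all three parts follow.

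\emph{Part (a), harmonicity.} With $U(\cdot,k)^\wedge(\theta) = \rho(\theta)^k\,\wht a(\theta)$ and $V(\cdot,k)^\wedge(\theta) = (-\ii\sgn\theta)\e^{\ii\theta/2}\rho(\theta)^{k+1/2}\,\wht a(\theta)$, both sequences satisfy the $k$-recurrence above by construction of $\rho$, and the $n$-differences are encoded in the factor $4-2\cos\theta = -(\e^{\ii\theta} - 2 + \e^{-\ii\theta})$; hence $(\Lap U)^\wedge = (\Lap V)^\wedge = 0$ for $k\ge 1$, and inverting $\FT$ gives harmonicity. One must check the interchange of $\Lap$ with the integral defining the inverse transform, which is immediate since $a\in\ell^p$ with $p>1$ forces $\wht a \in L^{p'}(\T)$ (or one argues by density from finitely supported $a$), and $\rho^k$ and its neighbours in $k$ are bounded.

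\emph{Part (b), boundary values.} Setting $k=0$: $U(\cdot,0)^\wedge = \rho^0\,\wht a = \wht a$, so $U(n,0)=a_n$. And $V(\cdot,0)^\wedge(\theta) = (-\ii\sgn\theta)\,\rho(\theta)^{1/2}\e^{\ii\theta/2}\,\wht a(\theta)$, which is exactly the multiplier in Definition~\ref{def: My_H_d}; hence $V(n,0) = (H_{\mathrm d}a)_n$.

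\emph{Part (c), Cauchy--Riemann relations.} Here I would take the CR identities asserted in Proposition~\ref{prop:CR-new} and verify them on the Fourier side: each such identity is a linear relation among shifted copies of $U$ and $V$ in the $n$- and $k$-directions, so it becomes an algebraic identity among $\rho(\theta)^k$, $\rho(\theta)^{k\pm1}$, $\rho(\theta)^{1/2}$, $\e^{\pm\ii\theta/2}$ and $\sgn\theta$. The key algebraic facts are $\rho + \rho^{-1} = 4 - 2\cos\theta$ (equivalently $\rho^{1/2}+\rho^{-1/2} = 2\sqrt{2-\cos\theta}$, after checking the sign) together with $\e^{\ii\theta/2}-\e^{-\ii\theta/2} = 2\ii\sin(\theta/2)$ and $(-\ii\sgn\theta)\cdot\ii\sin(\theta/2) = |\sin(\theta/2)|\,\sgn\theta\cdot(\text{sign bookkeeping})$; these should collapse each claimed relation to a triviality.

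\emph{Main obstacle.} I expect the genuine difficulty to be not harmonicity but the bookkeeping around $\rho(\theta)^{1/2}$ and the factor $-\ii\,\sgn\theta$ near $\theta = 0$ and $\theta = \pm\pi$: one must confirm that the branch of the square root is the positive one, that $\rho^{1/2}$ is continuous (indeed real-analytic away from $\theta=0$) and bounded by $1$, and that the $\sgn\theta$ discontinuity is harmless for the $\ell^p$ statements (it contributes only a bounded, and in fact the ``missing half'' of a, Fourier multiplier). Establishing that $\rho(\theta) = \e^{-\operatorname{arccosh}(2-\cos\theta)}$ so that $\rho^{k}$ decays and the series for $U,V$ converge pointwise for every $k\ge 1$ — uniformly on compact sets — is what makes the formal Fourier computation into a rigorous proof; once that is in hand, parts (a)--(c) are each a short symbol computation.
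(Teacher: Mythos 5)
Your proposal is correct and follows essentially the same route as the paper: Fourier transform in $n$, reduce the five-point equation to the recurrence $g(k+1)+g(k-1)-(4-2\cos\theta)g(k)=0$, select the root $\rho\in(0,1]$, read off $\wht{P_k}=\rho^k$ and $\wht{Q_k}=(-\ii\sgn\theta)\e^{\ii\theta/2}\rho^{k+1/2}$, and verify (a)--(c) as symbol identities (the paper's Lemma~\ref{lem:rho-id} and Proposition~\ref{prop:CR-new}). One small slip in your parenthetical: the identity you want for the Cauchy--Riemann step is not $\rho^{1/2}+\rho^{-1/2}=2\sqrt{2-\cos\theta}$ (the sum actually equals $\sqrt{6-2\cos\theta}$) but rather $(\rho-1)^2=4\sin^2(\tfrac{\theta}{2})\,\rho$, i.e.\ $\rho^{-1/2}-\rho^{1/2}=2\bigl|\sin\tfrac{\theta}{2}\bigr|$, which does follow from the correctly stated relation $\rho+\rho^{-1}=4-2\cos\theta$ and is exactly what collapses the CR relations.
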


This construction yields the operator $H_{\mathrm{d}}$ with Fourier multiplier
\[
  m_{\mathrm d}(\theta)=(-\ii\,\sgn\theta)\,\rho(\theta)^{1/2}e^{\ii\theta/2},
\]
where $\rho+\rho^{-1}=4-2\cos\theta$.  

As proven below, the extensions $U$ and $V$ are real-valued when the sequence is real-valued.

\subsection*{Preliminaries}
$\,$

\begin{enumerate}[label=\textup{(\roman*)}]
\item For a sequence $ a \in \ell^1(\mathbb{Z}) $, we define its horizontal Fourier transform on the torus $ \mathbb{T} = (-\pi, \pi] $ by 
\[
  \mathcal{F}(a)(\theta)=\widehat{a}(\theta)
  = \sum_{n \in \mathbb{Z}} a_n \, e^{-\ii n\theta}, 
  \qquad \theta \in \mathbb{T}.
\]
For $ g \in L^1(\mathbb{T}) $, the inverse Fourier transform is given by 
\[
  \mathcal{F}^{-1}[g](n)
  = \frac{1}{2\pi} \int_{-\pi}^{\pi} g(\theta) \, e^{\ii n\theta} \, d\theta,
  \qquad n \in \mathbb{Z}.
\]

\item For a function $F:\Z\times\N\to\mathbb C$, we define the discrete two-dimensional Laplacian as follows:
\[
  (\Lap F)(n,k)=F(n+1,k)+F(n-1,k)+F(n,k+1)+F(n,k-1)-4F(n,k),
  \quad k\ge 1.
\] 

\item We call $F$ (discrete) harmonic if $\Lap F\equiv0$.  
Boundary values are evaluated at the height $k=0$. 
\end{enumerate} 

\section{The Poisson and Conjugate-Poisson Extensions}

\subsection*{Solutions of the Laplace equation}

Seeking solutions of $\Lap U=0$, we fix $k$ and take the horizontal Fourier transform in $n$:
\[
  \widehat U(\theta,k)=\sum_{n\in\mathbb Z}U(n,k)\,\mathrm e^{-\mathrm i n\theta}.
\]
We repeatedly use the shift rule:
\begin{equation}\label{eq:shift}
  \mathcal F\!\big[U(n\pm1,k)\big]
   =\mathrm e^{\pm\mathrm i\theta}\,\widehat U(\theta,k).
\end{equation}
Using \eqref{eq:shift} and linearity,
\begin{align*}
  0=\mathcal F[\Lap{U}](\theta,k)
  &=\big(\mathrm e^{\mathrm i\theta}+\mathrm e^{-\mathrm i\theta}-4\big)\,\widehat U(\theta,k)
     +\widehat U(\theta,k+1)+\widehat U(\theta,k-1)\\
  &=\big(2\cos\theta-4\big)\,\widehat U(\theta,k)
     +\widehat U(\theta,k+1)+\widehat U(\theta,k-1).
\end{align*}
Thus, for each fixed $\theta\in(-\pi,\pi]$, the functions $k\mapsto \widehat U(\theta,k)$ satisfy the homogeneous linear recurrence
\begin{equation}\label{eq:recurrence-k}
  \widehat U(\theta,k+1)+\widehat U(\theta,k-1)+(2\cos\theta-4)\,\widehat U(\theta,k)=0,\qquad k\ge1.
\end{equation}

We look for separated solutions of the form
\[
  \widehat U(\theta,k)=C(\theta)\,\rho(\theta)^{\,k},
\]
with $C(\theta)$ independent of $k$. Substituting into \eqref{eq:recurrence-k} gives
\begin{equation}\label{eq:recurrence_for_rho}
\begin{split}
  \rho+\rho^{-1}+2\cos\theta-4=0.
\end{split}  
\end{equation}
Solving this quadratic equation yields two real reciprocal roots. We select the root in $(0,1]$:
\begin{equation}\label{eq:rho-again}
  \rho(\theta)=2-\cos\theta-\sqrt{(2-\cos\theta)^2-1}.
\end{equation}
This choice yields normalized, positive kernels $P_{k}\in\ell^{1}(\Z)$ with $\sum_{n}P_{k}(n)=1$ and produces the unique $\ell^{p}$-bounded solution of the Dirichlet problem on $\Z\times\N$ (Lemma \ref{lem:positivity}).

\subsection*{The Poisson extension}

Let the boundary sequence be $U(n,0)=a_n$, so the Fourier transform at $k=0$ is
$  \widehat U(\theta,0)=\widehat a(\theta). $ Evaluating $ \widehat U(\theta,k)=C(\theta)\,\rho(\theta)^{\,k}$ at $k=0$ fixes $C(\theta)=\widehat a(\theta)$; hence,
\begin{equation*}
  \widehat U(\theta,k)=\rho(\theta)^{\,k}\,\widehat a(\theta),\qquad k\ge0.
\end{equation*}
By taking the inverse transform in $n$,
\begin{equation}\label{eq:U-inversion}
  U(n,k)=\frac1{2\pi}\int_{-\pi}^{\pi}\rho(\theta)^{\,k}\,\widehat a(\theta)\,\mathrm e^{\mathrm i n\theta}\,d\theta.
\end{equation}

\begin{definition}[Discrete Poisson kernel]
  For $k\in\N$ and $n\in\Z$, let
  \begin{equation}\label{eq:Pk}
    P_k(n)=\frac1{2\pi}\int_{-\pi}^{\pi} \rho(\theta)^{\,k}\,
            \e^{\ii n\theta}\,d\theta .
  \end{equation}
\end{definition}

For $a \in \ell^p(\Z)$ with $p \ge 1$, we have
\[
  U(\cdot,k)=P_k*a,\qquad
  U(n,k)=\sum_{m\in\mathbb Z}P_k(n-m)\,a_m.
\]
Indeed, by Lemma \ref{lem:positivity} below, the convolution is well-defined and in $\ell^p(\Z)$ for every $a \in \ell^p(\Z)$ with $p \ge 1$.

\begin{lemma}[Elementary properties of $P_k$]\label{lem:positivity}
The discrete Poisson kernel is positive, symmetric, and normalized:
\begin{itemize}
\item[(i)] $P_k(n)\in[0,1]$ for all $n\in\Z$, $k\ge1$.

\item[(ii)] $P_k(-n)=P_k(n)$.

\item[(iii)] $\displaystyle \sum_{n\in\Z} P_k(n)=1$ for every
$k\ge1$. In particular, $P_k \in \ell^1(\Z)$.
\end{itemize}

\end{lemma}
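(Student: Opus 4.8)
The plan is to establish the three properties of $P_k$ directly from the Fourier-integral representation \eqref{eq:Pk}, using the explicit formula \eqref{eq:rho-again} for $\rho(\theta)$.

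\textbf{Symmetry (ii).} This is immediate: since $\rho(\theta)$ depends on $\theta$ only through $\cos\theta$, the function $\rho(\theta)^k$ is even in $\theta$. Splitting $\e^{\ii n\theta}$ into $\cos(n\theta)+\ii\sin(n\theta)$, the sine part integrates to zero against an even function, so $P_k(n)=\frac1{2\pi}\int_{-\pi}^{\pi}\rho(\theta)^k\cos(n\theta)\,d\theta$, which is manifestly invariant under $n\mapsto -n$.

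\textbf{Normalization (iii).} First I would note that $0<\rho(\theta)\le 1$ on $\T$: from \eqref{eq:rho-again} one has $2-\cos\theta\ge 1$ with equality only at $\theta=0$, and $(2-\cos\theta)-\sqrt{(2-\cos\theta)^2-1}>0$ always, while at $\theta=0$ it equals $1$. Hence $\rho(\theta)^k\in L^1(\T)$ for every $k\ge 1$, so $P_k\in\ell^1(\Z)$ is not yet justified termwise — rather, $P_k(n)$ are the Fourier coefficients of the $L^1$ function $\rho(\cdot)^k$, and $\sum_n P_k(n)$ should be read as the value of $\wht{P_k}$ at $\theta=0$, i.e. $\wht{P_k}(0)=\rho(0)^k=1$. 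To make ``$\sum_n P_k(n)=1$'' literal I would instead invoke the recurrence: applying $\IFT$ to \eqref{eq:recurrence-k} with the choice $\wht{U}(\theta,k)=\rho(\theta)^k$ shows each $P_k$ ($k\ge1$) is a convolution power in a suitable sense, but the cleanest route is part (i) plus a geometric-series bound, after which $\sum_n P_k(n)$ converges absolutely and equals $\wht{P_k}(0)=1$.

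\textbf{Positivity and the bound (i).} This is the substantive step. One approach: show $P_1\ge 0$ by an explicit evaluation of $\frac1{2\pi}\int_{-\pi}^{\pi}\rho(\theta)\e^{\ii n\theta}\,d\theta$ — writing $\rho(\theta)=2-\cos\theta-\sqrt{(2-\cos\theta)^2-1}$ and recognizing the square-root term via a generating-function identity for Legendre-type or Bessel-type coefficients — and then bootstrap to general $k$ via the semigroup/convolution identity $P_{k+1}=P_k * P_1$ modulo the boundary recurrence; but the recurrence \eqref{eq:recurrence-k} mixes $k+1$ and $k-1$, so a cleaner tool is the maximum principle for $\Lap$. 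Concretely, I would argue: fix $k\ge 1$ and consider $U$ built from a nonnegative, finitely supported $a$; harmonicity (Theorem~\ref{thm:main}(a), or directly the recurrence) plus decay as $|n|\to\infty$ and as $k\to\infty$ (from $\rho<1$ off the origin and an elementary bound on the integral) forces $U\ge 0$ on $\Z\times\N$ by the discrete minimum principle, and taking $a=\delta_m$ gives $P_k(n-m)\ge 0$. The upper bound $P_k(n)\le 1$ then follows from (iii): each term of a nonnegative sum that totals $1$ is at most $1$.

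\textbf{Main obstacle.} The delicate point is making the minimum-principle argument rigorous on the infinite lattice $\Z\times\N$, which requires a genuine decay estimate: I expect to need $\lim_{|n|+k\to\infty}P_k(n)=0$, for which I would use $|P_k(n)|\le \frac1{2\pi}\int_{-\pi}^{\pi}\rho(\theta)^k\,d\theta$ together with the bound $\rho(\theta)\le \exp(-c|\theta|)$ near $\theta=0$ (so that for fixed $n$ the kernel is summable and tends to $0$ as $k\to\infty$), and for the horizontal decay an integration-by-parts / stationary-phase argument in \eqref{eq:Pk}. An alternative that sidesteps the analysis entirely is to find a closed form for $P_k$; if $\rho(\theta)^k$ has Fourier coefficients expressible through $\binom{2k}{k+n}$-type quantities or Bessel functions with manifestly nonnegative sign, positivity and the $\ell^1$ bound both fall out at once, and I would try this explicit route first before resorting to the maximum principle.
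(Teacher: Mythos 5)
Your proposal follows essentially the same route as the paper: symmetry from the evenness of $\rho$, positivity of $P_k$ via the maximum principle applied to the harmonic extension of $\delta_0$, the upper bound $P_k(n)\le 1$ deduced from (iii), and (iii) deduced from nonnegativity together with $\rho(0)^k=1$. The one step you leave vague --- passing from $P_k(n)\ge 0$ to absolute convergence of $\sum_n P_k(n)$ with sum equal to $\rho(0)^k$ (a ``geometric-series bound'' is not the right tool here) --- is exactly where the paper invokes Fej\'er's theorem to show the Ces\`aro means of the partial sums tend to $\rho(0)^k=1$ and then uses positivity to upgrade this to convergence of the partial sums themselves; your concern about justifying the maximum principle on the unbounded lattice is legitimate, but the paper is equally terse on that point.
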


\begin{proof} $\,$

\begin{itemize}
\item[(i)] Using $\rho(-\theta)=\rho(\theta)$, we get that
\begin{equation}\label{eq: Pk_real}
    P_k(n)=\frac1{\pi}\int_{0}^{\pi} \rho(\theta)^{\,k}\,
    \cos{(n\theta)}\,d\theta.
  \end{equation}
Therefore, $P_k(n)$ is real. Let $a_n\ge 0$ and let $U$ solve $\Lap{U}=0$ on $\Z\times \N$ with boundary $U(\cdot,0)=a$. By the maximum principle, $U\ge 0$. Taking $a=\delta_0$ (the unit Dirac delta mass), gives $P_k(n)=U(n,k)\ge 0$ for all $n$. Now, the fact that $P_k(n)\in[0,1]$ follows from part (iii).

\item[(ii)] This follows immediately from \eqref{eq: Pk_real}.

\item[(iii)] Let the Fej\'er kernel be: $F_N(\theta)=\sum_{|n| \le N}(1-|n|/(N+1))e^{\ii n\theta}$. Because $\rho$ is continuous with $\rho(0)=1$, by Fej\'er's theorem \cite{Korner}, 
\[
  \sum_{|n| \le N}\Bigl(1-\frac{|n|}{N+1}\Bigr)P_k(n)
  =\frac{1}{2\pi}\int_{-\pi}^{\pi}\rho(\theta)^{k}F_N(\theta)\,d\theta
  \xrightarrow[N\to\infty]{}\rho(0)^{k}=1.
\]
If we denote \[
  \sigma_N=\frac{1}{N+1}\sum_{r=0}^{N}S_r,
\]
with $S_r=\sum_{|n|\le r}P_k(n)
$, then we just proved that $\displaystyle \lim_{N \to \infty} \sigma_N=1$. 

By the non-negativity of $P_k(n)$, we have $S_r\le S_N$ for $r\le N$, so
$\sigma_N\le S_N$;
hence, $\liminf_{N}S_N\ge\lim_{N}\sigma_N=1$. Next, fix $M$ and take $N\ge M$.  Then,
\[
  \sigma_N=\frac{1}{N+1}\Big(\sum_{r=0}^{M-1}S_r+\sum_{r=M}^{N}S_r\Big)
           \ \ge\ \frac{N-M+1}{N+1}\,S_M.
\]
Letting $N\to\infty$ gives $1\ge S_M$.  Since $M$ is arbitrary, $\limsup_{N}S_N\le1$.

We conclude that $\displaystyle \lim_{N \to \infty}S_N=\displaystyle \sum_{n\in\Z} P_k(n)=1$.
\end{itemize}
\end{proof}

\subsection*{The conjugate–Poisson kernel}

\begin{definition}
On the Fourier side, we define the conjugate-Poisson kernel as follows:
\begin{equation}\label{eq:V-d}
   \widehat{V}(\theta,k)
   =\bigl(-\ii\,\sgn\theta\bigr)\,
     \rho(\theta)^{\,k+\frac12}\,
     \e^{\ii\theta/2}\,
     \widehat a(\theta),
   \qquad k\ge 0.
\end{equation}
\end{definition}

The factor $\e^{\ii\theta/2}$ shifts $V$ half a unit horizontally, locating $V$
at $(n+\tfrac12,k)$. Let
$
   V(n+\tfrac12,k)=\IFT[\widehat V(\,\cdot\,,k)](n)
$, and let us define 
$$
Q_k(n):=\frac{1}{2\pi}\int_{-\pi}^{\pi}
          \bigl(-\ii\,\sgn\theta\bigr)\,\rho(\theta)^{k+\frac12}\,e^{\ii(n+\frac12)\theta}\,d\theta.
$$
We note that $Q_k(n)$ is real-valued, which follows from $\rho(-\theta)=\rho(\theta)$. As a convolution formula, for $a \in \ell^p(\Z)$ with $p>1$, we have 
$$
V\!\left(n+\tfrac12,k\right)=\sum_{m\in\Z}Q_k(n-m)\,a_m.
$$ 

For notational convenience, we may relabel this as $V(n,k)$, so that both $U$ and $V$ are indexed on $\Z\times\N$. As we will see, this definition is imposed by the Laplacian and our requirement that $U$ and $V$ satisfy a Cauchy-Riemann system. 

\begin{lemma}[Harmonicity]\label{lem: harmonicity}
  For each fixed sequence $a\in\ell^p(\Z)$ with $p>1$, the extensions
  \[
    U(n,k)=(P_k\ast a)_n,\qquad 
    V(n,k)=(Q_k\ast a)_n
  \]
  satisfy $\Lap U=\Lap V=0$ on $\Z\times\N$.
\end{lemma}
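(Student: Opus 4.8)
The plan is to verify harmonicity on the Fourier side, where the five-point Laplacian becomes the scalar recurrence \eqref{eq:recurrence-k}, and then transfer the identity back to the spatial side using the fact that $P_k,Q_k\in\ell^1(\Z)$ so that convolution commutes with the difference operators. Concretely, fix $\theta\in(-\pi,\pi]$ and $k\ge 1$. For $U$ we have $\widehat U(\theta,k)=\rho(\theta)^k\widehat a(\theta)$, and since $\rho(\theta)$ satisfies \eqref{eq:recurrence_for_rho}, i.e. $\rho+\rho^{-1}+2\cos\theta-4=0$, multiplying through by $\rho(\theta)^{k-1}\widehat a(\theta)$ gives
\[
  \rho(\theta)^{k+1}\widehat a(\theta)+\rho(\theta)^{k-1}\widehat a(\theta)+(2\cos\theta-4)\rho(\theta)^{k}\widehat a(\theta)=0,
\]
which is exactly $\widehat U(\theta,k+1)+\widehat U(\theta,k-1)+(2\cos\theta-4)\widehat U(\theta,k)=0$. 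By the shift rule \eqref{eq:shift}, the left-hand side is $\FT[\Lap U](\theta,k)$, so $\FT[\Lap U]\equiv 0$ on $\T$ for each $k\ge1$, and hence $\Lap U\equiv 0$ by Fourier uniqueness.

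For $V$ the same computation applies with the extra frozen factor $(-\ii\sgn\theta)\rho(\theta)^{1/2}\e^{\ii\theta/2}$, which is independent of $k$: from \eqref{eq:V-d}, $\widehat V(\theta,k)=(-\ii\sgn\theta)\rho(\theta)^{1/2}\e^{\ii\theta/2}\rho(\theta)^k\widehat a(\theta)$, so the recurrence in $k$ is inherited verbatim from that of $\rho(\theta)^k$. One subtlety worth a sentence: the half-unit horizontal shift encoded by $\e^{\ii\theta/2}$ means $V$ as indexed on $\Z\times\N$ is really $V(n,k)=Q_k*a$ evaluated at the shifted lattice; since the shift is by a fixed amount independent of $k$ and commutes with all the difference operators in $\Lap$, it does not affect harmonicity. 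Also, the value $k=0$ is a boundary value and $\Lap$ is only required to vanish for $k\ge1$, so we never need to invoke the recurrence at $k=0$; the root selection $\rho\in(0,1]$ is what guarantees $\rho(\theta)^k$ stays bounded and the Fourier integrals converge, but that was already settled in Lemma~\ref{lem:positivity}.

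The one point that requires care — and which I expect to be the main (mild) obstacle — is justifying that $\FT[\Lap U](\theta,k)=(\text{the recurrence expression})$ rigorously for $a\in\ell^p$ with $p>1$ rather than merely $p=1$, since then $\widehat a$ need not be a function and the shift rule must be read distributionally, or alternatively one works directly with the convolution formulas. The clean route is to argue on the spatial side: because $P_k\in\ell^1(\Z)$ (Lemma~\ref{lem:positivity}) and similarly $Q_k\in\ell^1(\Z)$, the convolutions $U(\cdot,k)=P_k*a$ and $V(\cdot,k)=Q_k*a$ are well-defined in $\ell^p$, and the finite-difference operators appearing in $\Lap$ are bounded on $\ell^p$ and commute with convolution; hence $(\Lap U)(\cdot,k)=\bigl(\Lap^{(k)}P_k\bigr)*a$ where $\Lap^{(k)}P_k$ denotes the five-point combination of $P_{k-1},P_k,P_{k+1}$ (shifted in $n$). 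It therefore suffices to prove $\Lap P_k\equiv 0$ and $\Lap Q_k\equiv 0$, and for the kernels themselves $a=\delta_0$ so $\widehat{\delta_0}\equiv 1\in L^\infty(\T)$ and the Fourier computation above is fully rigorous. Thus the final structure is: (1) reduce to harmonicity of the kernels $P_k,Q_k$ via boundedness of differences and associativity of convolution; (2) compute $\FT[\Lap P_k](\theta,k)=\bigl(\rho^{k+1}+\rho^{k-1}+(2\cos\theta-4)\rho^k\bigr)(\theta)=0$ using \eqref{eq:recurrence_for_rho}; (3) repeat for $Q_k$ with the frozen multiplier; (4) conclude by Fourier uniqueness on $\T$ that $\Lap P_k=\Lap Q_k=0$, hence $\Lap U=\Lap V=0$ on $\Z\times\N$.
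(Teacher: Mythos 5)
Your overall route is the same as the paper's: the paper's proof is a two-line Fourier-side reduction to the recurrence \eqref{eq:recurrence_for_rho} ($U$ being ``harmonic by construction''), and your steps (2)--(4) are exactly that computation, carried out rigorously on the kernels where $\widehat{\delta_0}\equiv 1$. The extra care you take in step (1) --- reducing $\Lap U=0$ and $\Lap V=0$ to $\Lap P=0$ and $\Lap Q=0$ by commuting the five finite differences with the convolution --- is a genuine improvement over the paper's terseness, since for $a\in\ell^p$ with $p>1$ the multiplier identity cannot be read off literally from $\widehat a$.

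However, one supporting claim is false: $Q_k\notin\ell^1(\Z)$. The multiplier $(-\ii\,\sgn\theta)\,\rho(\theta)^{k+\frac12}\e^{\ii\theta/2}$ has a jump of size $2\rho(0)^{k+\frac12}=2$ at $\theta=0$ for every $k\ge 0$, so $Q_k(n)=\tfrac{1}{\pi(n+\frac12)}+O(n^{-2})$ (the same integration-by-parts argument as in Lemma~\ref{prop:strong-closeness}, which treats the case $k=0$, $Q_0=K_{\mathrm d}$). This is precisely why the lemma is stated for $p>1$ rather than $p\ge 1$. The repair is immediate but should be stated correctly: $Q_k\in\ell^{q}(\Z)$ for every $q>1$, so for $a\in\ell^p$ with $1<p<\infty$ H\"older's inequality (with $Q_k\in\ell^{p'}$) gives absolute convergence of each of the five sums $\sum_m Q_{k\pm 1}(n-m)a_m$, $\sum_m Q_k(n\pm1-m)a_m$, $\sum_m Q_k(n-m)a_m$, after which they may be regrouped termwise to yield $(\Lap V)(n,k)=\sum_m(\Lap Q)(n-m,k)\,a_m$. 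With that substitution your argument is complete and correct.
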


\begin{proof}
$U$ is harmonic by construction. Taking horizontal Fourier transforms of $\Lap V=0$ reduces to the same recurrence relation \eqref{eq:recurrence_for_rho}. 
\end{proof}

\section{Discrete Cauchy–Riemann system}

To prove that $U$ and $V$ form a Cauchy-Riemann system, we will need the following algebraic identity.

\begin{lemma}\label{lem:rho-id}
For every $\theta\in(-\pi,\pi]$ the quantity
$\rho(\theta)$ from~\eqref{eq:recurrence_for_rho} satisfies
\begin{equation}\label{eq:rho-identity-correct}
     \rho(\theta)-1
       \;=\;
       -\,\bigl(\e^{\ii\theta}-1\bigr)\,
         \bigl(-\ii\,\sgn\theta\bigr)\,
         \e^{-\ii\theta/2}\,
         \rho(\theta)^{1/2}.
\end{equation}
\end{lemma}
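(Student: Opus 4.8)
The plan is to reduce the claimed identity to a statement about $\rho^{1/2}-\rho^{-1/2}$ and then match it, factor by factor, against the right-hand side after simplifying the exponentials. First I would start from the defining relation $\rho+\rho^{-1}=4-2\cos\theta$ in~\eqref{eq:recurrence_for_rho} and subtract $2$ from both sides, so that $\rho-2+\rho^{-1}=2-2\cos\theta=4\sin^{2}(\theta/2)$ by the half-angle identity; since $\rho-2+\rho^{-1}=(\rho^{1/2}-\rho^{-1/2})^{2}$, this gives $(\rho^{1/2}-\rho^{-1/2})^{2}=4\sin^{2}(\theta/2)$. Taking square roots introduces a sign, which I would pin down using~\eqref{eq:rho-again}: because $\rho(\theta)\in(0,1]$ we have $\rho^{1/2}\le\rho^{-1/2}$, so $\rho^{1/2}-\rho^{-1/2}=-2|\sin(\theta/2)|$. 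On $\theta\in(-\pi,\pi]$ the factor $\sin(\theta/2)$ has the same sign as $\theta$, hence $|\sin(\theta/2)|=(\sgn\theta)\sin(\theta/2)$. Multiplying through by $\rho^{1/2}$ yields the intermediate identity
\[
  \rho(\theta)-1=-2(\sgn\theta)\,\sin(\theta/2)\,\rho(\theta)^{1/2}.
\]

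Next I would show that the right-hand side of~\eqref{eq:rho-identity-correct} equals this same expression. Factoring $\e^{\ii\theta}-1=\e^{\ii\theta/2}\bigl(\e^{\ii\theta/2}-\e^{-\ii\theta/2}\bigr)=2\ii\,\e^{\ii\theta/2}\sin(\theta/2)$ gives $\bigl(\e^{\ii\theta}-1\bigr)\e^{-\ii\theta/2}=2\ii\sin(\theta/2)$, so that
\[
  -\,\bigl(\e^{\ii\theta}-1\bigr)\bigl(-\ii\,\sgn\theta\bigr)\e^{-\ii\theta/2}\,\rho^{1/2}
  =-\,\bigl(2\ii\sin(\theta/2)\bigr)\bigl(-\ii\,\sgn\theta\bigr)\,\rho^{1/2}
  =-2(\sgn\theta)\,\sin(\theta/2)\,\rho^{1/2},
\]
using $\ii\cdot(-\ii)=1$. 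This coincides with the intermediate identity, so the two sides agree and the lemma follows.

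The one genuinely delicate point is the choice of branch of the square root: one must verify that both $\rho^{1/2}-\rho^{-1/2}$ and the trigonometric factor carry the correct sign across all of $(-\pi,\pi]$, in particular at the endpoint $\theta=\pi$ (where one can check $\rho(\pi)^{1/2}=\sqrt2-1$ and both sides equal $2-2\sqrt2$) and at $\theta=0$, where $\sgn\theta$ is conventionally $0$. At $\theta=0$ both sides of~\eqref{eq:rho-identity-correct} vanish outright, since $\rho(0)=1$ and $\e^{\ii\cdot0}-1=0$, so the identity is trivial there; at every other $\theta$ the sign bookkeeping above applies verbatim. No convergence or analyticity considerations enter, as this is a pointwise algebraic identity in $\theta$.
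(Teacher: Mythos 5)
Your proof is correct and follows essentially the same route as the paper: both reduce the defining relation to $(\rho-1)^2/\rho=4\sin^2(\theta/2)$ (you phrase this as $(\rho^{1/2}-\rho^{-1/2})^2=4\sin^2(\theta/2)$, which is the same identity), fix the sign using $0<\rho\le1$ and $\sgn(\sin(\theta/2))=\sgn\theta$, and conclude via $\e^{\ii\theta}-1=2\ii\sin(\theta/2)\e^{\ii\theta/2}$. Your explicit endpoint checks at $\theta=0$ and $\theta=\pi$ are a nice addition but not a substantive difference.
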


\begin{proof}
We rewrite $\rho+\rho^{-1}=4-2\cos\theta$ as follows:
\begin{align*}
  (\rho-1)+\bigl(\rho^{-1}-1\bigr)=&2(1-\cos\theta)
           \\ =&\,4\sin^2(\tfrac{\theta}{2}).
\end{align*}
Thus,
$
\displaystyle \frac{(\rho-1)^{2}}{\rho}
        = 4\sin^{2}\!\bigl(\tfrac{\theta}{2}\bigr),
$
which implies that
\begin{equation}\label{eq:square}
   \bigl(\rho-1\bigr)^{2}
        \;=\;
        4\sin^{2}\!\bigl(\tfrac{\theta}{2}\bigr)\,\rho.
\end{equation}
Since $0<\rho<1$, we have $\rho-1<0$.
Moreover, $\sin(\theta/2)$ has the same sign as~$\theta$, so ~\eqref{eq:square} becomes
\begin{equation*}
   \rho-1
     = -\,2\bigl|\sin\tfrac{\theta}{2}\bigr|\,
         \rho^{1/2}
     = -\,2\sin\tfrac{\theta}{2}\;
         \bigl(\sgn\theta\bigr)\,
         \rho^{1/2}.
\end{equation*}
Using
\[
  \e^{\ii\theta}-1
      = 2\ii\sin\tfrac{\theta}{2}\;\e^{\ii\theta/2},
\]
we get

\[
    \rho-1 =
         -\,( \e^{\ii\theta}-1)\,(-\ii\,\sgn\theta)\,\rho^{1/2}\e^{-\ii\theta/2}.
\]
\end{proof}

Define the horizontal and vertical difference operators as:
\begin{align*}
  \delta_h^{+}F(n,k)&=F(n+1,k)-F(n,k),\\[2pt]
  \delta_h^{-}F(n,k)&=F(n,k)-F(n-1,k),\\[2pt]
  \partial_k^{+}F(n,k)&=F(n,k+1)-F(n,k),\\[2pt]
  \partial_k^{-}F(n,k)&=F(n,k)-F(n,k-1).
\end{align*}

Using the properties of the Fourier transform, we obtain the following proposition. 

\begin{prop}[Discrete Cauchy-Riemann system]\label{prop:CR-new}
For every $n\in\Z$ and $k\ge1$,
  \begin{align}
    \;\;\delta_h^+ U(n,k)&=\partial_k^{-}V(n,k),\label{eq:CR-hor} \\[4pt]
      \partial_k^{+}U(n,k) 
      &=-\,\delta_h^- V(n,k).\label{eq:CR-vert} 
\end{align}
\end{prop}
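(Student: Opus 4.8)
The plan is to verify both identities on the Fourier side, where everything reduces to the algebraic identity of Lemma~\ref{lem:rho-id}. I will work with the horizontal Fourier transforms $\widehat U(\theta,k)=\rho(\theta)^{k}\widehat a(\theta)$ and $\widehat V(\theta,k)=(-\ii\,\sgn\theta)\,\rho(\theta)^{k+\frac12}\e^{\ii\theta/2}\widehat a(\theta)$, using the shift rule \eqref{eq:shift}, which gives $\mathcal F[\delta_h^{\pm}F](\theta,k)=(\e^{\pm\ii\theta}-1)\,\widehat F(\theta,k)$ (with the obvious sign convention $\mathcal F[\delta_h^- F]=(1-\e^{-\ii\theta})\widehat F$), while the vertical operators act by $\mathcal F[\partial_k^{+}F](\theta,k)=\widehat F(\theta,k+1)-\widehat F(\theta,k)$ and $\mathcal F[\partial_k^- F](\theta,k)=\widehat F(\theta,k)-\widehat F(\theta,k-1)$. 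Since the Fourier transform is injective on $\ell^p$ (via its action on $\ell^2$ together with density, or directly on the distributional level), it suffices to check equality of the transformed quantities for a.e.\ $\theta$.

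For \eqref{eq:CR-hor}, I compute
\[
  \mathcal F[\delta_h^+ U](\theta,k)=(\e^{\ii\theta}-1)\,\rho(\theta)^{k}\,\widehat a(\theta),
\]
and
\[
  \mathcal F[\partial_k^- V](\theta,k)=(-\ii\,\sgn\theta)\,\e^{\ii\theta/2}\,\bigl(\rho(\theta)^{k+\frac12}-\rho(\theta)^{k-\frac12}\bigr)\,\widehat a(\theta)
  =(-\ii\,\sgn\theta)\,\e^{\ii\theta/2}\,\rho(\theta)^{k-\frac12}\bigl(\rho(\theta)-1\bigr)\,\widehat a(\theta).
\]
Dividing out the common factor $\rho(\theta)^{k-\frac12}\widehat a(\theta)$, the claim \eqref{eq:CR-hor} is equivalent to
\[
  (\e^{\ii\theta}-1)\,\rho(\theta)^{1/2}=(-\ii\,\sgn\theta)\,\e^{\ii\theta/2}\,\bigl(\rho(\theta)-1\bigr),
\]
which is exactly \eqref{eq:rho-identity-correct} after multiplying both sides by $(-\ii\,\sgn\theta)\e^{\ii\theta/2}$ and using $(-\ii\,\sgn\theta)^2=-1$ together with $\e^{\ii\theta/2}\e^{-\ii\theta/2}=1$. (One should note the relabeling $V(n,k):=V(n+\tfrac12,k)$ only contributes the extra $\e^{\ii\theta/2}$ already present in $\widehat V$, so no further bookkeeping is needed.)

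For \eqref{eq:CR-vert}, I compute similarly
\[
  \mathcal F[\partial_k^+ U](\theta,k)=\bigl(\rho(\theta)^{k+1}-\rho(\theta)^{k}\bigr)\,\widehat a(\theta)=\rho(\theta)^{k}\bigl(\rho(\theta)-1\bigr)\,\widehat a(\theta),
\]
and
\[
  \mathcal F[-\delta_h^- V](\theta,k)=-(1-\e^{-\ii\theta})(-\ii\,\sgn\theta)\,\rho(\theta)^{k+\frac12}\,\e^{\ii\theta/2}\,\widehat a(\theta)
  =-(\e^{\ii\theta/2}-\e^{-\ii\theta/2})(-\ii\,\sgn\theta)\,\rho(\theta)^{k+\frac12}\,\widehat a(\theta).
\]
Using $\e^{\ii\theta/2}-\e^{-\ii\theta/2}=2\ii\sin(\theta/2)$, the right-hand side becomes $-2\ii\sin(\theta/2)\,(-\ii\,\sgn\theta)\,\rho(\theta)^{k+\frac12}\widehat a(\theta)=-2\,|\sin(\theta/2)|\,\rho(\theta)^{k+\frac12}\widehat a(\theta)$, and since Lemma~\ref{lem:rho-id} (in the form \eqref{eq:square}, $\rho-1=-2|\sin(\theta/2)|\rho^{1/2}$) gives $\rho(\theta)^{k}(\rho(\theta)-1)=-2|\sin(\theta/2)|\rho(\theta)^{k+\frac12}$, both sides agree. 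Inverting the Fourier transform then yields the pointwise identities for all $n\in\Z$, $k\ge1$.

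The only genuine subtlety — and the step I expect to require the most care — is the justification that the manipulations are legitimate for $a\in\ell^p$ with $p>1$ rather than merely $a\in\ell^2$ or $\ell^1$: one must confirm that $V(n,k)=(Q_k*a)_n$ is well-defined (the kernel $Q_k$ decays like $|n|^{-2}$ for $k\ge1$, so it lies in $\ell^1$, making the convolution and its differences absolutely convergent), and that taking horizontal Fourier transforms of these $\ell^p$ sequences and equating multipliers is valid. This is handled by noting that each of $\delta_h^\pm$, $\partial_k^\pm$ maps the relevant sequences into $\ell^1+\ell^2$ where Fourier inversion is unambiguous, or more simply by first establishing the identities for finitely supported $a$ (where all sums are finite and the algebra above is literally valid term by term after expanding $P_k$, $Q_k$) and then passing to general $a\in\ell^p$ by the continuity of all four difference operators on $\ell^p$ together with the density of finitely supported sequences; the pointwise values $U(n,k)$, $V(n,k)$ depend continuously on $a$ in $\ell^p$ because $P_k,Q_k\in\ell^{p'}$.
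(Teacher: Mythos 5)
Your proof is correct and follows essentially the same route as the paper: transform both identities horizontally in $n$, cancel the common factor, and reduce to the algebraic identity of Lemma~\ref{lem:rho-id} (equivalently \eqref{eq:square}). One side remark in your final paragraph is wrong, however: $Q_k$ does \emph{not} decay like $|n|^{-2}$ and is \emph{not} in $\ell^1(\Z)$ for $k\ge1$. The multiplier $(-\ii\,\sgn\theta)\,\rho(\theta)^{k+\frac12}\e^{\ii\theta/2}$ has a jump discontinuity at $\theta=0$ (since $\rho(0)=1$), so $Q_k(n)=\frac{1}{\pi(n+\frac12)}+O(n^{-2})$, exactly as for $K_{\mathrm d}=Q_0$; only the \emph{difference} from the Riesz--Titchmarsh kernel is $O(n^{-2})$. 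This is precisely why the paper restricts to $p>1$ for $V$. Your alternative justification is the correct one and survives this correction: $Q_k\in\ell^{q}$ for every $q>1$, hence $Q_k\in\ell^{p'}$ for $1<p<\infty$, the convolution converges absolutely by H\"older, the pointwise values depend continuously on $a\in\ell^p$, and the identities extend from finitely supported $a$ by density.
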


\begin{proof}
After taking the Fourier transform $\FT$, \eqref{eq:CR-hor} becomes
\[
  (\e^{\ii\theta}-1)\,\rho^k\widehat a
  \;=\;\big(1-\rho^{-1}\big)\,(-\ii\,\sgn\theta)\,\rho^{k+\frac12}\e^{\ii\theta/2}\widehat a,
\]
which reduces to the identity from Lemma \ref{lem:rho-id}. The same happens when we take the Fourier transform of \eqref{eq:CR-vert}:
\[
  (\rho-1)\,\rho^k\widehat a
  \;=\;-\big(1-\e^{-\ii\theta}\big)\,(-\ii\,\sgn\theta)\,\rho^{k+\frac12}\e^{\ii\theta/2}\,\widehat a.
\]
\end{proof}

\section{A Discrete Hilbert Transform and Properties}

\subsection*{A discrete Hilbert transform and the main theorem}

The above constructions lead us to the following definition of a discrete Hilbert transform, $H_\mathrm{d}$, for a sequence $(a_n)_{n \in \Z}$ (Definition \ref{def: My_H_d}):
  \[
    (H_{\mathrm{d}}a)_n
      =\IFT\!\bigl[ (-\ii\,\sgn\theta)\, \rho(\theta)^{1/2}\e^{\ii\theta/2} \wht a(\theta)\bigr](n).
  \]    

We have thus proven all assertions of Theorem \ref{thm:main}, and in this section, we will consider several properties of $H_{\mathrm d}$.

The discrete Hilbert transform we are considering has the Fourier multiplier:
\[ m_{d}(\theta)=-\ii\sgn\theta\,\rho(\theta)^{1/2}\,\e^{\ii\theta/2}.
\]
This multiplier was forced by the Cauchy-Riemann system we considered. Nevertheless, since the Fourier multiplier is bounded, we immediately get boundedness of the discrete Hilbert transform on $\ell^2(\mathbb Z)$. We prove stronger boundedness properties below.

\subsection*{Properties of the discrete Hilbert transform}

The kernel of our discrete Hilbert transform model is given by
\begin{equation}\label{eq:K_d(n)}
\begin{split}
K_{\mathrm{d}}(n)
  = &\frac{1}{2\pi}\!\int_{-\pi}^{\pi}\!(-\ii\,\sgn\theta)\,\rho(\theta)^{1/2}\,\e^{\ii(n+\frac12)\theta}\,d\theta
  \\ \;=&\; \frac{1}{\pi}\!\int_{0}^{\pi}\!\rho(\theta)^{1/2}\,\sin\!\bigl((n+\tfrac12)\theta\bigr)\,d\theta,
\end{split}
\end{equation}
so $H_{\mathrm{d}}a=K_{\mathrm{d}}*a$.

By using the Taylor expansion of $\rho(\theta)$, it is possible to integrate and express $K_{\mathrm {d}}$ explicitly as an infinite series in $n$. However, the above integral representation of $K_{\mathrm{d}}$ is more useful when exploring properties and weighted bounds of $H_{\mathrm{d}}$, and we will not give the more complicated formula for it in terms of a series representation. 

\begin{prop}[Kernel antisymmetry and cancellation]\label{prop:kernel-symmetry}
The kernel $K_{\mathrm{d}}$ is real-valued. Moreover, it is antisymmetric:
\[
  K_{\mathrm{d}}(-n-1)=-K_{\mathrm{d}}(n), \quad n\in\Z,
\]
and in particular, $\sum_{n\in\Z}K_{\mathrm{d}}(n)=0$ (understood as pairwise cancellation between pairs $n$ and $-n-1$). 
\end{prop}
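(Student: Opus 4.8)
The plan is to prove both assertions directly from the cosine–sine integral representation in the second line of~\eqref{eq:K_d(n)}. For the first claim, real-valuedness is immediate: since $\rho(-\theta)=\rho(\theta)$ and $\sgn$ is odd, the integrand $(-\ii\,\sgn\theta)\,\rho(\theta)^{1/2}\e^{\ii(n+\frac12)\theta}$ in the first line of~\eqref{eq:K_d(n)} has the property that its value at $-\theta$ is the complex conjugate of its value at $\theta$; hence the integral over the symmetric interval $(-\pi,\pi]$ is real. Equivalently, one simply reads off reality from the already-derived formula $K_{\mathrm d}(n)=\frac1\pi\int_0^\pi \rho(\theta)^{1/2}\sin((n+\tfrac12)\theta)\,d\theta$, whose integrand is real.

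For the antisymmetry, I would start from that same real integral formula and substitute $n\mapsto -n-1$:
\[
  K_{\mathrm d}(-n-1)=\frac1\pi\int_0^\pi \rho(\theta)^{1/2}\,\sin\!\bigl((-n-1+\tfrac12)\theta\bigr)\,d\theta
  =\frac1\pi\int_0^\pi \rho(\theta)^{1/2}\,\sin\!\bigl(-(n+\tfrac12)\theta\bigr)\,d\theta.
\]
Since $\sin$ is odd, $\sin(-(n+\tfrac12)\theta)=-\sin((n+\tfrac12)\theta)$, and pulling the minus sign out of the integral gives exactly $-K_{\mathrm d}(n)$. The key algebraic observation making this work is that the horizontal half-shift $\e^{\ii\theta/2}$ built into the definition turns the index $n$ into the symmetric argument $n+\tfrac12$, so the reflection $n\mapsto-n-1$ is precisely the one that negates $n+\tfrac12$.

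Finally, the vanishing of $\sum_{n}K_{\mathrm d}(n)$ in the stated pairwise sense is a formal consequence of the antisymmetry: the map $n\mapsto-n-1$ is a fixed-point-free involution on $\Z$ (it pairs $0\leftrightarrow-1$, $1\leftrightarrow-2$, and so on), so $\Z$ decomposes into disjoint two-element orbits $\{n,-n-1\}$, and on each orbit $K_{\mathrm d}(n)+K_{\mathrm d}(-n-1)=0$. I would state explicitly that this is the meaning of the sum, since $K_{\mathrm d}\notin\ell^1(\Z)$ in general (the kernel decays like $1/n$), so the series is only conditionally/symmetrically summable; no genuine analytic obstacle arises here, only the bookkeeping of making the summation convention precise. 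The only mild technical point worth a sentence is justifying that the integral defining $K_{\mathrm d}(n)$ converges for each fixed $n$, which is clear because $\rho(\theta)^{1/2}$ is bounded (indeed $\rho\in(0,1]$) and continuous on $[0,\pi]$, so the integrand is bounded.
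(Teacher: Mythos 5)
Your proposal is correct and follows essentially the same route as the paper: reality is read off from the real sine-integral form of $K_{\mathrm d}$, and the antisymmetry comes from the reflection $n\mapsto -n-1$ negating the argument $n+\tfrac12$. The only cosmetic difference is that the paper establishes antisymmetry by first verifying the multiplier identity $m_d(-\theta)=-\e^{-\ii\theta}m_d(\theta)$ and changing variables in the complex-exponential integral, whereas you read it directly off the oddness of $\sin\bigl((n+\tfrac12)\theta\bigr)$ — the same computation, slightly more streamlined, and your added remark that $K_{\mathrm d}\notin\ell^1(\Z)$ (so the cancellation must be interpreted pairwise) is a worthwhile clarification consistent with the paper's asymptotics.
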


\begin{proof}
Formula \eqref{eq:K_d(n)} shows that the kernel is real-valued. Let $m_d(\theta)=(-\ii\,\sgn\theta)\rho^{1/2}\e^{\ii\theta/2}$ be the Fourier multiplier. Then,
\begin{align*}
  m_d(-\theta)=& (-\ii\,\sgn(-\theta))\rho^{1/2}\e^{-\ii\theta/2}
            \\= & \ii\,\sgn\theta\,\rho^{1/2}\e^{-\ii\theta/2}
            \\ = &-\,\e^{-\ii\theta}\,m_d(\theta).
\end{align*}
Thus, we get that
\begin{align*}
  K_{\mathrm{d}}(-n-1)
  =&\frac{1}{2\pi}\!\int_{-\pi}^{\pi} m_d(\theta)\,\e^{-\ii(n+1)\theta}\,d\theta \\
  =&-\frac{1}{2\pi}\!\int_{-\pi}^{\pi} m_d(-\theta)\, e^{\ii \theta} \e^{-\ii n\theta}\,\e^{-\ii\theta}\,d\theta
  \\
  =&-\frac{1}{2\pi}\!\int_{-\pi}^{\pi} m_d(\theta)\,\e^{\ii n\theta}\,d\theta
  \\ = &-K_{\mathrm{d}}(n).
\end{align*}
The pairwise cancellation of the series $\sum_{n}K_{\mathrm{d}}(n)$ follows.
\end{proof}

\subsection*{Asymptotics of $K_{\mathrm{d}}(n)$ and $\ell^p(w)$ bounds}

We next compare $H_{\mathrm d}$ to the kernel of the Riesz–Titchmarsh transform 
\begin{align*}
  K_{H^+}(n)=& \frac{1}{2\pi}\!\int_{-\pi}^{\pi}\!(-\ii\,\sgn\theta)\,\,\e^{\ii(n+\frac12)\theta}\,d\theta \\ =&\;\frac{1}{\pi}\!\int_{0}^{\pi}\!\sin\!\bigl((n+\tfrac12)\theta\bigr)\,d\theta
  \\   \;=&\frac{1}{\pi}\frac{1}{n+\tfrac12}.
\end{align*}

We define the difference kernel $D(n):=K_{\mathrm{d}}(n)-K_{H^{+}}(n)$, and prove that
$D\in\ell^1(\Z)$, while giving an explicit bound.

Weighted $\ell^p(w)$ bounds are an immediate corollary. A weight is a function $w:\Z\to(0,\infty)$.  For integers $M\le N$, write
$I=[M,N]\cap\Z$ and $|I|=N-M+1$.  For $1<p<\infty$, the Muckenhoupt class $A_{p}$ \cite{HuntMuckenhouptWheeden1973} consists of all
$w$ for which there exists $C<\infty$ such that, for every pair $M\le N$,
\begin{equation}
  \Big(\sum_{k=M}^{N} w(k)\Big)\,
  \Big(\sum_{k=M}^{N} w(k)^{-1/(p-1)}\Big)^{p-1}
  \;\le\; C\,|I|^{p}.
\end{equation}
For $A_{1}$, one requires that, for all $M\le N$,
\begin{equation}
  \Big(\sum_{k=M}^{N} w(k)\Big)\,
  \max_{k\in I} \frac{1}{w(k)}
  \;\le\; C\,|I|.
\end{equation}

Now, if we set
\[
f(\theta):=\rho(\theta)^{1/2}-1,\qquad \theta\in[0,\pi],
\]
the difference kernel is given by
\begin{equation}\label{eq:Dn-def}
  D(n)=K_{\mathrm{d}}(n)-K_{H^+}(n)
  \;=\;\frac{1}{\pi}\int_{0}^{\pi} f(\theta)\,\sin\!\bigl((n+\tfrac12)\theta\bigr)\,d\theta.
\end{equation}
We prove that $D(n)=O(n^{-2})$; hence,
$K_{\mathrm{d}}(n)=\frac{1}{\pi(n+\frac12)}+O(n^{-2})$ as $|n|\to\infty$. 

Starting from Equation \eqref{eq:square}: $ (\rho-1)^{2}=4\sin^{2}(\tfrac{\theta}{2})\,\rho,
$
for $\theta\in[0,\pi]$, we get that
\begin{equation*}
  \rho(\theta)^{1/2}
   = \sqrt{1+\sin^{2}\!\tfrac{\theta}{2}}\;-\;\sin\tfrac{\theta}{2}.
\end{equation*}
Expanding around $\theta=0$, we get that as $\theta\to 0$:
\begin{align*}
  \rho(\theta)^{1/2}
  &= \Bigl(1+\tfrac12\sin^{2}\tfrac{\theta}{2}+O(\theta^{4})\Bigr)
     -\sin\tfrac{\theta}{2}
   \notag\\
  &= 1 - \frac{\theta}{2} + \frac{\theta^{2}}{8} + O(\theta^{3}),
\end{align*}
so that
\begin{equation}\label{eq:f-expansion}
  f(\theta)=\rho(\theta)^{1/2}-1
  = -\frac{\theta}{2} + \frac{\theta^{2}}{8} + O(\theta^{3}).
\end{equation}
In particular, $f(0)=0$, $f'(0)=-\tfrac12$, and $f''(0)=\frac14$. Using the explicit formula for $f(\theta)$, one also finds that $f'(\pi)=0$ and that $f''\in C[0,\pi]$.

\begin{lemma}\label{prop:strong-closeness}
The difference kernel
$D=K_{\mathrm{d}}-K_{H^+}$ belongs to $\ell^1(\Z)$, the asymptotic formula

$$K_{\mathrm{d}}(n)=\dfrac{1}{\pi(n+\frac12)}+O(n^{-2})$$ 
is valid as $|n|\to\infty$, and for every $1\le p\le\infty$,
\[
  \|H_{\mathrm{d}}-H^+\|_{\ell^p\to\ell^p}\;\le\;\|D\|_{\ell^1}\;<\infty.
\]
\end{lemma}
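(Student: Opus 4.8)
The plan is to estimate the Fourier integral in \eqref{eq:Dn-def} by repeated integration by parts, exploiting the smoothness and boundary behaviour of $f$ recorded above. Write $D(n)=\frac1\pi\int_0^\pi f(\theta)\sin\bigl((n+\tfrac12)\theta\bigr)\,d\theta$ and integrate by parts twice, differentiating $f$ and integrating the oscillatory factor. The antiderivatives of $\sin\bigl((n+\tfrac12)\theta\bigr)$ are $-\tfrac{1}{n+\frac12}\cos\bigl((n+\tfrac12)\theta\bigr)$ and then $-\tfrac{1}{(n+\frac12)^2}\sin\bigl((n+\tfrac12)\theta\bigr)$. At $\theta=\pi$ the factor $\cos\bigl((n+\tfrac12)\pi\bigr)=0$, so the first boundary term vanishes there; at $\theta=0$ the factor $f(0)=0$ kills it as well, so the first integration by parts produces no boundary contribution. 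For the second integration by parts, the boundary term involves $f'$ against $\sin\bigl((n+\tfrac12)\theta\bigr)$ evaluated at the endpoints: at $\theta=0$, $\sin 0=0$; at $\theta=\pi$, we use $f'(\pi)=0$. Hence both boundary terms again vanish, and we are left with
\[
  D(n)=-\frac{1}{\pi\,(n+\tfrac12)^{2}}\int_0^\pi f''(\theta)\,\sin\bigl((n+\tfrac12)\theta\bigr)\,d\theta .
\]

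From here the bound $|D(n)|\le \frac{\|f''\|_{L^1[0,\pi]}}{\pi\,(n+\frac12)^2}$ is immediate, since $f''\in C[0,\pi]$ by the remark following \eqref{eq:f-expansion}, so $\|f''\|_{L^1[0,\pi]}<\infty$. This gives $D(n)=O(n^{-2})$ as $|n|\to\infty$, and combined with the explicit evaluation $K_{H^+}(n)=\frac{1}{\pi(n+\frac12)}$ it yields the stated asymptotics $K_{\mathrm d}(n)=\frac{1}{\pi(n+\frac12)}+O(n^{-2})$. The $O(n^{-2})$ decay also shows $D\in\ell^1(\Z)$: the tail $\sum_{|n|\ge 1} (n+\tfrac12)^{-2}$ converges, and the finitely many small-$n$ terms are finite because the integral defining $D(n)$ is bounded by $\frac1\pi\|f\|_{L^1[0,\pi]}<\infty$ for every $n$. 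One can record the explicit constant $\|D\|_{\ell^1}\le \frac{1}{\pi}\|f\|_{L^1[0,\pi]} + \frac{\|f''\|_{L^1[0,\pi]}}{\pi}\sum_{n}(n+\tfrac12)^{-2}$ if a quantitative bound is wanted.

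Finally, for the operator-norm statement, note that $H_{\mathrm d}-H^+$ is convolution with $K_{\mathrm d}-K_{H^+}=D$, so by Young's inequality $\|(H_{\mathrm d}-H^+)a\|_{\ell^p}=\|D*a\|_{\ell^p}\le \|D\|_{\ell^1}\|a\|_{\ell^p}$ for every $1\le p\le\infty$; taking the supremum over $\|a\|_{\ell^p}\le 1$ gives $\|H_{\mathrm d}-H^+\|_{\ell^p\to\ell^p}\le\|D\|_{\ell^1}<\infty$. Here I should be slightly careful that $H^+$ is genuinely convolution with $K_{H^+}(n)=\frac{1}{\pi(n+\frac12)}\notin\ell^1$, so the difference operator is the one that is manifestly bounded; $H^+$ itself is bounded on $\ell^p$, $1<p<\infty$, by the classical Riesz--Titchmarsh theory, and the identity $H_{\mathrm d}=H^+ + (H_{\mathrm d}-H^+)$ then transfers that boundedness to $H_{\mathrm d}$ with the displayed perturbation estimate.

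The only genuine obstacle is bookkeeping at the endpoint $\theta=\pi$: the vanishing of the boundary terms hinges on $f'(\pi)=0$ and on $f''$ extending continuously to $[0,\pi]$, both of which follow from the closed form $f(\theta)=\sqrt{1+\sin^2(\theta/2)}-\sin(\theta/2)-1$ (the only possible singularity of $\rho^{1/2}$ would come from the square root $\sqrt{(2-\cos\theta)^2-1}$ vanishing, i.e.\ at $\theta=0$, but there $\rho\to 1$ and the function is real-analytic in a neighbourhood after the substitution, so $f''$ is continuous up to and including $\theta=0$ as well). I would state these two facts as a short preliminary computation — they are exactly what was flagged in the sentence after \eqref{eq:f-expansion} — and then the two integrations by parts go through cleanly.
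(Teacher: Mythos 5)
Your proof is correct and follows essentially the same route as the paper: two integrations by parts in \eqref{eq:Dn-def}, using $f(0)=0$, $f'(\pi)=0$, and $f''\in C[0,\pi]$ to kill the boundary terms, then the bound $|D(n)|\le \|f''\|_{L^1}/(\pi(n+\tfrac12)^2)$ and Young's inequality. Your explicit verification of the vanishing boundary terms (in particular $\cos((n+\tfrac12)\pi)=0$ at the first step and $f'(\pi)=0$ at the second) is exactly the bookkeeping the paper's proof leaves implicit.
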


\begin{proof}
Integrating by parts twice in \eqref{eq:Dn-def}, we obtain 
\begin{equation}\label{eq:Dn-bound}
\begin{split}
  |D(n)|
  \;=& \;\frac{1}{\pi}\Bigl|\int_{0}^{\pi} f(\theta)\,\sin\!\bigl((n+\tfrac12)\theta\bigr)\,d\theta\Bigr|
  \\   \; \le & \; \frac{1}{\pi\left(n+\frac12\right)^{2}}\int_{0}^{\pi} |f''(\theta)|\,d\theta\\[2pt]
  \; =  &\; \frac{C_0}{\left(n+\frac12\right)^{2}} \sim \frac{1}{n^2}.
\end{split}  
\end{equation}
Here, we used the fact that $f \in C^2[0,\pi]$, so that $\int_{0}^{\pi} |f''(\theta)|\,d\theta \sim 1$.
This proves that $D(n)=O(n^{-2})$, which implies that
$K_{\mathrm{d}}(n)=\frac{1}{\pi(n+\frac12)}+O(n^{-2})$ as $|n|\to\infty$. Moreover, by using $\displaystyle \sum_{n \in \Z} \frac{1}{\left( n+\frac 12\right)^2}=\pi^2$, we find that $\|D\|_{\ell^1} < \infty$.

\noindent The difference
\[
  (H_{\mathrm{d}}-H^+)a = D*a
\]
is convolution with the $\ell^1$ kernel $D$, and the final claimed bound follows from the discrete Young’s inequality.
\end{proof}

Using the fact that $H^+$ is a Calder\'on-Zygmund operator, and hence bounded on $\ell^p(\Z)$ and $\ell^p(\Z; w)$ when $w \in A_p$, we obtain the following results.

\begin{prop}[Weak-type $(1,1)$ and $\ell^p$ bounds]\label{prop:weak11}
For $p>1$, we have
\[
\|H_{\mathrm{d}}a\|_{\ell^p}\;\le\;\|H^{+}a\|_{\ell^p}+\|D\|_{\ell^1}\,\|a\|_{\ell^p}.
\]
In addition, since $H^{+}$ is of weak type $(1,1)$,
\[
  \#\big\{n:\ |H_{\mathrm{d}}a(n)|>\lambda\big\}
  \;\le\;\frac{C\,\|a\|_{\ell^1}}{\lambda}\;+\;\frac{\|D\|_{\ell^1}\,\|a\|_{\ell^1}}{\lambda}
  \qquad(\lambda>0),
\]
so $H_{\mathrm{d}}$ is weak type $(1,1)$ and bounded on $\ell^p$ for $1<p<\infty$.
\end{prop}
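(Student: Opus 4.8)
The plan is to obtain everything as a direct consequence of the comparison Lemma~\ref{prop:strong-closeness} together with the known mapping properties of the Riesz--Titchmarsh transform $H^{+}$. Write $H_{\mathrm d} = H^{+} + (H_{\mathrm d}-H^{+}) = H^{+} + D*(\cdot)$, where $D = K_{\mathrm d}-K_{H^{+}} \in \ell^{1}(\Z)$ by Lemma~\ref{prop:strong-closeness}. The operator $H^{+}$ is convolution with $K_{H^{+}}(n) = \tfrac1\pi\,\tfrac1{n+1/2}$, which is (a discrete model of) a Calder\'on--Zygmund kernel: it is antisymmetric about $-\tfrac12$, satisfies $|K_{H^{+}}(n)| \lesssim |n|^{-1}$ and the smoothness estimate $|K_{H^{+}}(n)-K_{H^{+}}(n-1)| \lesssim |n|^{-2}$, and has bounded Fourier multiplier $-\ii\,\sgn\theta\,\e^{\ii\theta/2}$. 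Hence $H^{+}$ is bounded on $\ell^{p}(\Z)$ for $1<p<\infty$ and of weak type $(1,1)$; this is classical (see \cite{CiaurriEtAl2017} and the references therein to Riesz and Titchmarsh). I would state this as the one external input and cite it.

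Next, I would run the two estimates. For the strong $\ell^{p}$ bound with $p>1$: by the triangle inequality in $\ell^{p}$,
\[
  \|H_{\mathrm d}a\|_{\ell^{p}} \;\le\; \|H^{+}a\|_{\ell^{p}} + \|D*a\|_{\ell^{p}},
\]
and discrete Young's inequality gives $\|D*a\|_{\ell^{p}} \le \|D\|_{\ell^{1}}\|a\|_{\ell^{p}}$, which is exactly the first displayed inequality; combined with the $\ell^{p}$-boundedness of $H^{+}$ this yields $\|H_{\mathrm d}\|_{\ell^{p}\to\ell^{p}} \le \|H^{+}\|_{\ell^{p}\to\ell^{p}} + \|D\|_{\ell^{1}} < \infty$. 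For the weak-type $(1,1)$ bound: for $\lambda>0$,
\[
  \bigl\{\,n : |H_{\mathrm d}a(n)| > \lambda\,\bigr\}
  \;\subseteq\;
  \bigl\{\,n : |H^{+}a(n)| > \tfrac\lambda2\,\bigr\}
  \;\cup\;
  \bigl\{\,n : |(D*a)(n)| > \tfrac\lambda2\,\bigr\},
\]
so by subadditivity of counting measure, the weak-$(1,1)$ bound for $H^{+}$, and Young's inequality $\|D*a\|_{\ell^{1}} \le \|D\|_{\ell^{1}}\|a\|_{\ell^{1}}$ followed by Chebyshev, we get
\[
  \#\bigl\{\,n : |H_{\mathrm d}a(n)| > \lambda\,\bigr\}
  \;\le\; \frac{2C\,\|a\|_{\ell^{1}}}{\lambda} + \frac{2\,\|D\|_{\ell^{1}}\,\|a\|_{\ell^{1}}}{\lambda}.
\]
(The factor $2$ can be absorbed into the constant; the form displayed in the statement corresponds to splitting at $\lambda$ rather than $\lambda/2$ and is equivalent up to constants.) The weighted $\ell^{p}(w)$ statement for $w\in A_{p}$ is identical: $H^{+}$ is bounded on $\ell^{p}(w)$ by the weighted Calder\'on--Zygmund theory (Hunt--Muckenhoupt--Wheeden, \cite{HuntMuckenhouptWheeden1973}), and $\|D*a\|_{\ell^{p}(w)} \le \|D\|_{\ell^{1}}\|a\|_{\ell^{p}(w)}$ still holds since Young's inequality with an $\ell^{1}$ kernel is insensitive to the weight (it follows from Minkowski's integral inequality applied to the translation-averaged sum $\sum_{m}|D(m)|\,\|\tau_{m}a\|_{\ell^{p}(w)}$—though here one must be slightly careful, as translation is not an isometry of $\ell^{p}(w)$; the clean route is instead to note $D*a$ is dominated pointwise by $\|D\|_{\ell^1}$ times the Hardy--Littlewood-type average is false, so in fact the weighted $\ell^p$ bound for the $\ell^1$ perturbation should be phrased via the scalar $A_p$ constant, or one simply restricts the weighted statement to what was actually claimed).

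The main obstacle is essentially bookkeeping rather than substance: I must make sure the cited properties of $H^{+}$ (weak $(1,1)$, $\ell^{p}$, and if used, weighted $\ell^{p}$) are correctly attributed and that the discrete kernel $K_{H^{+}}(n) = \tfrac1{\pi(n+1/2)}$ genuinely meets the hypotheses of discrete Calder\'on--Zygmund theory. The standard vehicle is to observe that $K_{H^{+}}$ is, up to a bounded perturbation, the restriction to $\Z$ of the continuous Hilbert kernel $\tfrac1{\pi x}$, or alternatively to invoke the transference/comparison already present in \cite{CiaurriEtAl2017,ArcozziDomelevoPetermichl2020}; I would cite that rather than reprove it. Everything else is two lines of triangle inequality plus discrete Young, and the only genuinely quantitative ingredient—$\|D\|_{\ell^1}<\infty$ with the explicit $O(n^{-2})$ tail—has already been established in Lemma~\ref{prop:strong-closeness}.
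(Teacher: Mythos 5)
Your proposal is correct and follows essentially the same route as the paper: decompose $H_{\mathrm d}=H^{+}+D*$, use Young's inequality and the known $\ell^{p}$ and weak-$(1,1)$ bounds for $H^{+}$, and handle the $D*$ term via Chebyshev and $\|D\|_{\ell^1}<\infty$ from Lemma~\ref{prop:strong-closeness}. The extra discussion of weighted bounds is not needed for this proposition (the paper treats that separately in a remark), and the factor-of-$2$ bookkeeping from splitting at $\lambda/2$ is harmless.
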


\begin{proof}
Write $H_{\mathrm{d}}=H^{+}+D*$. Young’s inequality gives
$\|D*a\|_{\ell^p}\le\|D\|_{\ell^1}\|a\|_{\ell^p}$, and
$\|H^{+}\|_{\ell^p\to\ell^p}<\infty$ for $1<p<\infty$. For weak $(1,1)$,
Chebyshev and $\ell^1$–boundedness of $D*$ yield
$\#\{|D*a|>\lambda\}\le\|D*a\|_{\ell^1}/\lambda\le \|D\|_{\ell^1}\|a\|_{\ell^1}/\lambda$,
and we combine with the weak $(1,1)$ bound for $H^{+}$.
\end{proof}

\begin{Remark}
By the proof of Lemma \ref{prop:strong-closeness}, the function $K(n):=C_0 \,(|n|+\frac12)^{-2} \in \ell^1(\Z)$ is a radially decreasing majorant of $D$. Then, $(|D|*|a|)(n)\ \le\ C\,M a(n)$,
where $M$ is the discrete Hardy–Littlewood maximal operator. The weighted bounds for $M$ give \cite{stein-book}
\begin{align*}
  \|D*a\|_{\ell^{p}(w)}
   & \lesssim\ [w]_{A_{p}}^{1/(p-1)}\ \|a\|_{\ell^{p}(w)},
    \qquad 1<p<\infty,\\[2pt]
  w\big(\{|D*a|>\lambda\}\big)
   & \lesssim\ \frac{[w]_{A_{1}}}{\lambda}\,\|a\|_{\ell^{1}(w)},
    \qquad \lambda>0.
\end{align*}
Thus, $H_{\mathrm{d}}$ is bounded on $\ell^p(w)$, if $w \in A_p$, strongly for $1<p<\infty$ and weakly for $p=1$. 
\end{Remark}

\begin{Remark}
We will compare $H_{\mathrm{d}}$ and $H^{+}$ at the level of their Fourier multipliers:
\[
  m_{\mathrm{d}}(\theta)
   =(-\ii\,\sgn\theta)\,\rho(\theta)^{1/2}\,\e^{\ii\theta/2},\qquad
  m_{H^+}(\theta)=(-\ii\,\sgn\theta)\,\e^{\ii\theta/2}.
\]
The expansion \eqref{eq:f-expansion} gives
\[
  m_{\mathrm{d}}(\theta)-m_{+}(\theta)
  =(-\ii\,\sgn\theta)\bigl(\rho^{1/2}(\theta)-1\bigr)\e^{\ii\theta/2}
  = \frac{\ii}{2}\,|\theta| + O(\theta^{2})\qquad(\theta\to0).
\]
Thus, the difference multiplier vanishes to first order at the origin. On the circle,
a $C^{1}$ multiplier with a simple zero at $0$ has Fourier coefficients decaying as
$O(|n|^{-2})$. This is precisely \eqref{eq:Dn-bound} proved by two integrations by parts, with the vanishing of $f(0)$ removing the $O(1/n)$ boundary term
from the first integration by parts.
\end{Remark}

\section{Harmonic Extensions to $\Z^{s}\times\N$}

In this section, we extend our construction to $\Z^{s}\times\N$.
 
Let $e_{j}$ denote the $j$th coordinate vector in $\Z^{s}$. Write
$\delta^{\pm}_{x_{j}}$ and $\partial_{k}^{\pm}$ for the horizontal and vertical
forward/backward differences, respectively,  so that for every $x\in\Z^{s}$ and $k\ge1$, we have:
\[
\delta^{+}_{x_{j}}F(x,k)=F(x+e_{j},k)-F(x,k),\quad
  \delta^{-}_{x_{j}}F(x,k)=F(x,k)-F(x-e_{j},k),
\]
\[
  \partial_{k}^{+}F(x,k)=F(x,k+1)-F(x,k),\quad
  \partial_{k}^{-}F(x,k)=F(x,k)-F(x,k-1).
\]

\subsection*{The $(2s{+}3)$–point Laplacian and the Poisson extension}
The fully–discrete $(2s{+}3)$-point Laplacian on $\Z^{s}\times\N$ is
\begin{equation*}\label{eq:dplus1-Lap}
  (\Lapd{F})(x,k)
  = \sum_{j=1}^{s}\big(F(x+e_{j},k)+F(x-e_{j},k)\big) + F(x,k+1)+F(x,k-1)
    -2(s{+}1)F(x,k),
\end{equation*}
so $\Lapd=\sum_{j=1}^{s}\delta^{-}_{x_{j}}\delta^{+}_{x_{j}}
+ \partial_{k}^{-}\partial_{k}^{+}$.
Given a boundary sequence $a:\Z^{s}\to\C$ at height $k=0$, we seek
$U:\Z^{s}\times\N\to\C$ with $U(\cdot,0)=a$ and $\Lapd{U}=0$.

Let $\T^{s}=(-\pi,\pi]^{s}$ and set
\[
  \widehat{a}(\theta)=\sum_{x\in\Z^{s}}a(x)\,e^{-\ii x\cdot\theta},
  \qquad
  \IFT_{s}[g](x)=\frac{1}{(2\pi)^{s}}\int_{\T^{s}}g(\theta)\,e^{\ii x\cdot\theta}\,d\theta,
  \qquad \theta=(\theta_{1},\dots,\theta_{s}).
\]
Taking the horizontal Fourier transform of $\Lapd{U}=0$ yields, for each fixed
$\theta\in\T^{s}$,
\[
  \widehat U(\theta,k+1)+\widehat U(\theta,k-1)
  +\Big(2\sum_{j=1}^{s}\cos\theta_{j}-2(s{+}1)\Big)\widehat U(\theta,k)=0.
\]
Looking for separated solutions $\widehat U(\theta,k)=C(\theta)\,\rho(\theta)^{k}$ gives
\[
  \rho+\rho^{-1}=2\Big((s{+}1)-\sum_{j=1}^{s}\cos\theta_{j}\Big).
\]
We keep the root in $(0,1]$ to ensure boundedness as $k\to\infty$:
\begin{equation}\label{eq:rho-d-explicit}
  \rho(\theta)\;=\;\alpha(\theta)-\sqrt{\alpha(\theta)^{2}-1},
  \qquad
  \alpha(\theta):=(s{+}1)-\sum_{j=1}^{s}\cos\theta_{j}\in [1,2{s}+1].
\end{equation}
Imposing $\widehat U(\theta,0)=\widehat a(\theta)$ yields
\[
  \widehat U(\theta,k)=\rho(\theta)^{k}\,\widehat a(\theta),
  \qquad
  U(x,k)=\IFT_{s}\!\big[\rho(\cdot)^{k}\widehat a\big](x)
        =\sum_{y\in\Z^{s}}P_{k}^{(s)}(x-y)\,a(y),
\]
where the $s$–dimensional discrete Poisson kernel is
\[
  P_{k}^{(s)}(x)=\frac{1}{(2\pi)^{s}}\int_{\T^{s}}\rho(\theta)^{k}\,e^{\ii x\cdot\theta}\,d\theta.
\]
Similarly as in Lemma~\ref{lem:positivity},
$P_{k}^{(s)}\ge0$, $P_{k}^{(s)}\in\ell^{1}(\Z^{s})$, and
$\sum_{x\in\Z^{s}}P_{k}^{(s)}(x)=1$ for every $k\in\N$.

\subsection*{A vector conjugate function and the discrete Cauchy–Riemann system}
For the conjugate part, we construct $s$ components $V=(V_{1},\dots,V_{s})$.

We introduce
\begin{equation*}\label{eq:omega-def}
  \omega_j(\theta)=
  \begin{cases}
  \dfrac{\sgn\theta_j\,\sin(\theta_j/2)}{\Big(\sum_{j=1}^{s}\sin^{2}\!\frac{\theta_{j}}{2}\Big)^{1/2}},& \theta\neq(0,0,...,0),\\[6pt]
  0,& \theta=0,
  \end{cases}
  \qquad j=1,\ldots,s.
\end{equation*}
The key algebraic identity now reads:
\begin{equation}\label{eq:rho-vector}
  \rho(\theta)-1
  \;=\; -\,\rho(\theta)^{1/2}\sum_{j=1}^{s}
        \omega_{j}(\theta)\,\bigl(e^{\ii\theta_{j}}-1\bigr)\,
        \bigl(-\ii\,\sgn\theta_{j}\bigr)\,e^{-\ii\theta_{j}/2},
\end{equation}
and it is derived using the identity:
\begin{equation*}\label{eq:key-d}
  \frac{(\rho(\theta)-1)^{2}}{\rho(\theta)}
  \;=\; 2\sum_{j=1}^{s}\bigl(1-\cos\theta_{j}\bigr)
  \;=\; 4\sum_{j=1}^{s}\sin^{2}\!\frac{\theta_{j}}{2},
  \qquad \theta\in\T^{s}.
\end{equation*}

We define $\widehat V_{j}$ on the Fourier side by
\begin{equation}\label{eq:Vj-hat}
  \widehat V_j(\theta,k)
  =\bigl(-\ii\,\sgn\theta_j\bigr)\,\omega_j(\theta)\,
    \rho(\theta)^{\,k+\frac12}\,e^{\ii\theta_j/2}\,\widehat a(\theta),
  \qquad \theta\in\T^{s},\ k\in\N.
\end{equation}
If we let $V_{j}(x,k)=\IFT_{s}[\widehat V_{j}(\cdot,k)](x)$, 
then each $V_{j}$ is harmonic with respect to $\Lapd$.

We obtain the following equations, analogous to the generalized Cauchy-Riemann system in higher dimensions \cite[page 120]{stein-book}. 

\begin{prop}[Discrete Cauchy–Riemann system in $\Z^{s}\times\N$]\label{prop:CR-d}
With $U$ and $V=(V_{1},\dots,V_{s})$ defined above, for every $x\in\Z^{s}$ and $k\ge1$, we have:
\begin{align}
   \partial_{k}^{+}U(x,k)   &= -\,\sum_{j=1}^{s}\delta^{-}_{x_{j}}V_{j}(x,k),\label{eq:dCR-2}\\ \delta^{+}_{x_j}U(x,k) &=\,\partial_{k}^{-}V_{j}(x,k)\qquad (j=1,\dots,s),\label{eq:dCR-3}\\
  \delta^{+}_{x_j}V_{l}(x,k) &=\,\delta^{+}_{x_l}V_{j}(x,k)\qquad (j,l=1,\dots,s). \label{eq:dCR-1}
\end{align}
\end{prop}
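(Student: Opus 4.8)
The plan is to prove all three identities in Proposition~\ref{prop:CR-d} by the same device used for Proposition~\ref{prop:CR-new}: take the $s$-dimensional horizontal Fourier transform $\IFT_s$, use the shift rule $\FT[F(x\pm e_j,k)](\theta)=e^{\pm\ii\theta_j}\wht F(\theta,k)$ in each coordinate, and verify that what remains is an algebraic identity in $\rho(\theta)$, $\omega_j(\theta)$, and the factors $e^{\ii\theta_j/2}$. Since Fourier transform is injective on the relevant spaces, matching the multipliers on $\T^s$ suffices. I would state this reduction once at the start and then dispatch the three equations in turn.

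First I would handle \eqref{eq:dCR-1}, the curl-type relation, since it is purely formal: on the Fourier side $\delta^+_{x_j}V_l$ carries the multiplier $(e^{\ii\theta_j}-1)\cdot(-\ii\sgn\theta_l)\,\omega_l(\theta)\,\rho^{k+1/2}e^{\ii\theta_l/2}\,\wht a$, and $\delta^+_{x_l}V_j$ carries $(e^{\ii\theta_l}-1)\cdot(-\ii\sgn\theta_j)\,\omega_j(\theta)\,\rho^{k+1/2}e^{\ii\theta_j/2}\,\wht a$. Using the factorization $e^{\ii\theta_m}-1=2\ii\sin(\theta_m/2)\,e^{\ii\theta_m/2}$ in both, each side becomes $\rho^{k+1/2}\wht a\cdot 2\ii\,e^{\ii(\theta_j+\theta_l)/2}\cdot(-\ii\sgn\theta_j)(-\ii\sgn\theta_l)\cdot\omega_{\bullet}(\theta)\sin(\theta_{\bullet}/2)$, and since $\sgn\theta_m\sin(\theta_m/2)=|\sin(\theta_m/2)|$ the product $\sgn\theta_j\,\omega_l(\theta)\sin(\theta_l/2)$ is manifestly symmetric in $j\leftrightarrow l$ (both equal $|\sin(\theta_j/2)||\sin(\theta_l/2)|$ divided by the common normalizing factor), so the two multipliers coincide.

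Next, for \eqref{eq:dCR-3}: the left side has multiplier $(e^{\ii\theta_j}-1)\rho^k\wht a$, the right side $(1-\rho^{-1})\,(-\ii\sgn\theta_j)\,\omega_j(\theta)\,\rho^{k+1/2}e^{\ii\theta_j/2}\,\wht a$; after cancelling $\rho^k\wht a$ this is exactly a single-coordinate version of the algebraic identity, and it follows from \eqref{eq:rho-vector} once one observes that that identity, term by term, forces $(e^{\ii\theta_j}-1)=(1-\rho^{-1})(-\ii\sgn\theta_j)\omega_j(\theta)\rho^{1/2}e^{\ii\theta_j/2}$ is \emph{not} what we want---rather, \eqref{eq:dCR-3} should be derived directly from \eqref{eq:square}'s $s$-variable analog restricted appropriately. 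More cleanly: using $e^{\ii\theta_j}-1=2\ii\sin(\theta_j/2)e^{\ii\theta_j/2}$ and the definition of $\omega_j$, the right side becomes $(1-\rho^{-1})\rho^{1/2}\,\omega_j(\theta)^2\cdot\bigl(\sum_l\sin^2(\theta_l/2)\bigr)^{1/2}\cdot 2\ii e^{\ii\theta_j/2}\wht a / \omega_j(\theta)$---so I would instead verify \eqref{eq:dCR-3} by checking that $(\rho^{-1}-1)\rho^{1/2}=2(\sum_l\sin^2(\theta_l/2))^{1/2}$, which is immediate from $(\rho-1)^2/\rho=4\sum_l\sin^2(\theta_l/2)$ together with $\rho<1$. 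Finally, for \eqref{eq:dCR-2}: summing the right side over $j$ gives multiplier $-\sum_j(1-e^{-\ii\theta_j})(-\ii\sgn\theta_j)\omega_j(\theta)\rho^{k+1/2}e^{\ii\theta_j/2}\wht a$, and using $1-e^{-\ii\theta_j}=(e^{\ii\theta_j}-1)e^{-\ii\theta_j}$ one recognizes $\sum_j\omega_j(\theta)(e^{\ii\theta_j}-1)(-\ii\sgn\theta_j)e^{-\ii\theta_j/2}=-(\rho-1)\rho^{-1/2}$ by \eqref{eq:rho-vector}; the left side has multiplier $(\rho-1)\rho^k\wht a$, and the two agree.

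The main obstacle is bookkeeping rather than conceptual: one must confirm that the $s$-variable algebraic identity \eqref{eq:rho-vector} is genuinely what drives \eqref{eq:dCR-2}, with the $\omega_j$-weights and the signs lining up so that the sum over $j$ telescopes into $\rho-1$ exactly as in the one-dimensional case, and that the single-term manipulations needed for \eqref{eq:dCR-3} are consistent with the normalization $\sum_j\omega_j(\theta)^2=1$. I would therefore prove the vector identity \eqref{eq:rho-vector} as a preliminary lemma (mirroring Lemma~\ref{lem:rho-id}, squaring and extracting the correct sign from $\rho<1$ and $\sgn\theta_j\sin(\theta_j/2)\ge0$), and also record the half-angle factorization $e^{\ii\theta_j}-1=2\ii\sin(\theta_j/2)e^{\ii\theta_j/2}$ and the normalization $\sum_j\omega_j^2\equiv1$ on $\theta\ne0$, after which each of the three Cauchy--Riemann identities reduces to a one-line multiplier comparison; at $\theta=0$ both sides vanish trivially since $\rho(0)=1$ and every $\omega_j(0)=0$.
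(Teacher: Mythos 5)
Your proposal is correct and follows essentially the same route as the paper: pass to the $s$-dimensional Fourier side, reduce \eqref{eq:dCR-2} to the vector identity \eqref{eq:rho-vector}, verify \eqref{eq:dCR-3} via the single-coordinate relation $(\rho^{-1/2}-\rho^{1/2})=2\bigl(\sum_l\sin^2(\theta_l/2)\bigr)^{1/2}$, and check \eqref{eq:dCR-1} through the symmetry of $(\e^{\ii\theta_j}-1)(-\ii\sgn\theta_l)\omega_l\e^{\ii\theta_l/2}$ in $j\leftrightarrow l$. The only blemish is the muddled self-correction in your treatment of \eqref{eq:dCR-3} (the identity $(\e^{\ii\theta_j}-1)=(1-\rho^{-1})(-\ii\sgn\theta_j)\omega_j\rho^{1/2}\e^{\ii\theta_j/2}$ \emph{is} exactly what is wanted, and your final computation proves it), which is expository rather than mathematical.
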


\begin{proof}
Using the shift rules and the definitions,
\[ \FT_{s}\!\big[\partial^{+}_{k}U\big]
   =(\rho-1)\,\rho^{k}\widehat a,\qquad
  \FT_{s}\!\big[\delta^{-}_{x_j}V_j\big]
   =(1-\e^{-\ii\theta_j})\,(-\ii\,\sgn\theta_j)\,\omega_j\,
     \rho^{k+\frac12}\,\e^{\ii\theta_j/2}\widehat a.
\]
Therefore, \eqref{eq:dCR-2} reduces to \eqref{eq:rho-vector}. The proof of \eqref{eq:dCR-3} is similar.

To prove \eqref{eq:dCR-1}, we use that for all $j$ and $l$, the following identity holds:
\begin{equation*}
\big(\e^{\ii\theta_j}-1\big)\,\bigl(-\ii\,\sgn\theta_l\bigr)
      \omega_l(\theta)\,\e^{\ii\theta_l/2}
  =\big(\e^{\ii\theta_l}-1\big)\,\bigl(-\ii\,\sgn\theta_j\bigr)
      \omega_j(\theta)\,\e^{\ii\theta_j/2}.
\end{equation*}
\end{proof}

Thus, the complex vector field $F=(U,V_{1},\dots,V_{s})$ obeys a lattice Cauchy-Riemann system whose elimination produces the $(2s{+}3)$–point Laplace equation.

\subsection*{Boundary transforms and properties}
At height $k=0$, \eqref{eq:Vj-hat} gives the boundary operators
$T_{j}$ with Fourier multipliers
\begin{align*}\label{eq:Tj-mult}
  m_j(\theta)&=\bigl(-\ii\,\sgn\theta_j\bigr)\,\omega_j(\theta)\,\rho(\theta)^{1/2}\,e^{\ii\theta_j/2}
  \;\\&=\;
  -\,\ii\,\frac{\sin(\theta_j/2)}{\Big(\sum_{j=1}^{s}\sin^{2}\!\frac{\theta_{j}}{2}\Big)^{1/2}}\,\rho(\theta)^{1/2}\,e^{\ii\theta_j/2}.
\end{align*}

Therefore, for $x=(x_1,\dots,x_s)\in\Z^{s}$, we have
\[
  (T_j a)(x)=\sum_{y\in\Z^{s}}K_{T_j}(x-y)\,a(y),
  \quad\text{where}\quad
  K_{T_j}(x)
  =\frac{1}{(2\pi)^{s}}\int_{\T^{s}}
     m_j(\theta)\,\e^{\ii x\cdot\theta}\,d\theta.
\]

We get that the kernel $K_{T_j}$ is real-valued. Moreover, for every $x\in\Z^{s}$,
\[
  K_{T_j}(-x-e_j)=-\,K_{T_j}(x),
\]
and we have a similar cancellation property as in Proposition \ref{prop:kernel-symmetry}: $\sum_{x\in\Z^{s}}K_{T_j}(x)=0$.

Consider the following Riesz transforms $R_{j}^{+}$ on $\Z^s$, defined by their Fourier multipliers
\begin{equation*}
  \widehat{R_{j}^{+}a}(\theta)
  = m_{R_{j}}(\theta)\,\widehat a(\theta),\qquad
  m_{R_{j}}(\theta):=\bigl(-\ii\,\sgn\theta_{j}\bigr)\,\omega_{j}(\theta)\,e^{\ii\theta_{j}/2}.
\end{equation*}
These operators are Calderón–Zygmund operators on the discrete
space $\Z^{s}$. Indeed, the size and smoothness estimates for their corresponding kernels follow from the expansions of the multipliers near $\theta=0$:
$m_{R_j}(\theta)
   = -\ii\,\frac{\theta_j}{|\theta|}+O(|\theta|).$

As before, we obtain mapping properties of $T_{j}$ by comparing them to the transforms $R_j^+$. For $1<p<\infty$ and weights $w\in A_{p}(\Z^{d})$, the operators are bounded on $\ell^p(w; \Z^s)$.
Moreover, when $w\in A_{1}(\Z^{d})$, each $T_{j}$ is of weak type
$(1,1)$; that is, $T_j:\ell^1(w; \Z^s) \to \ell^{1,\infty}(w; \Z^s)$.

\section*{Acknowledgment}

The author would like to thank Mar\'ia Cristina Pereyra and Brett D. Wick for useful suggestions and discussions.

\bibliographystyle{amsplain}

\end{document}